\newcommand{\labbel}{\label}
\newtheorem{theorem}{Theorem}[section]
\newtheorem{lemma}[theorem]{Lemma}
\newtheorem{thm}[theorem]{Theorem}
\newtheorem{prop}[theorem]{Proposition} 
\newtheorem{proposition}[theorem]{Proposition} 
\newtheorem{cor}[theorem]{Corollary} 
\newtheorem{corollary}[theorem]{Corollary}
\theoremstyle{definition}
\newtheorem{definition}[theorem]{Definition}
\newtheorem{problems}[theorem]{Problems} 
\theoremstyle{remark}
\newtheorem{remark}[theorem]{Remark}
\DeclareMathOperator{\Thetax}{\Theta}
\DeclareMathOperator{\Psix}{\Psi}
\DeclareMathOperator{\betax}{\beta}
\DeclareMathOperator{\Mx}{M}
\DeclareMathOperator{\Rx}{R}
\newcommand{\m}[1]{ \mathfrak {\uppercase {#1}}}
 \newcommand{\conv}{{^{-}}}
\begin{document}
 
\title
{Representable tolerances in varieties}

\author{Paolo Lipparini} 
\address{Dipartimento Novello di Matematica\\Viale della Ricerca  Scient\` \i fica\\II Universit\`a di Roma (Tor Vergata)\\I-00133 ROME ITALY}
\urladdr{http://www.mat.uniroma2.it/\textasciitilde lipparin}

\thanks{We wish to express our gratitude to G. Cz{\'e}dli and E. W. Kiss 
for stimulating discussions and correspondence.}  

\keywords{Tolerance, representable, image of a congruence, lattice, (quasi-)variety} 

\subjclass[2010]{Primary: 08A30, 06B10; Secondary: 08B99, 08C15, 06B20, 06B75}

\begin{abstract}
We discuss two possible ways of representing tolerances: 
first, as a homomorphic image of some congruence;
second,
as the relational composition of  some compatible relation
with its converse.
 The second way is independent from the variety under consideration, while
the first way is variety-dependent. The relationships between
these two kinds of representations are clarified. 

As an application, we show that any tolerance on some
lattice $\m L $ is the image of some congruence on a subalgebra of 
$\m L \times \m L$. This is related to
recent results by G. Cz{\'e}dli and E. W. Kiss \cite{CK}. 
\end{abstract}

\maketitle

\section{Introduction} \labbel{intro}

Recall that a \emph{tolerance} on some algebra is a binary, compatible, symmetric and reflexive relations. Thus, a congruence is just a transitive tolerance.

It is quite surprising that the study of tolerances (apart from intrinsic interest) has revealed to be 
  essential in the study of congruences. Indeed, present-day research shows that tolerances
are becoming increasingly important even in many other
at first look seemingly unrelated contexts
\cite{Ch,CCH,CCHL,CG,CHL,CK,Gu,HM,HMK,
KK,acta,
Sm,We}.

\section{A first example} \labbel{first}

\begin{definition} \labbel{reprhom}
Suppose that  $\m a$, $\m b$ are algebras, 
and 
$\varphi : \m b \to \m a$
 is a surjective homomorphism.
It is folklore, and easy to see, that if 
$\beta$ is a congruence on $\m b$, 
then $\varphi ( \beta ) = \{ ( \varphi (a), \varphi(b)) \mid a \betax b \in B\} $ 
is a tolerance $\Theta$ on  $\m a$.
In the above situation, we say that \emph{$\Theta$ is an image of $\beta$}. In case we need to specify $\m b$ explicitly,
we shall say that 
\emph{$\Theta$ is the image of a congruence on $\m b$}.
 \end{definition}   

We now exemplify our methods in the particular case of lattices.

\begin{theorem} \labbel{lattices}
If $\m  L$ is a lattice and $\Theta$ is a tolerance on $\m L$,
then $\Theta$ is the image of some congruence on some subalgebra of 
$\m L \times \m L$.      
 \end{theorem}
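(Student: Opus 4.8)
The plan is to produce the required witnesses explicitly; beyond Definition~\ref{reprhom} nothing else is needed. Let $\m b$ be the subalgebra of $\m L\times\m L$ with universe
\[
 B=\{(a,b)\in L\times L \mid a\le b \text{ and } a\mathrel\Theta b\}.
\]
This is a subuniverse of $\m L\times\m L$: it is the intersection of the order relation of $\m L$ (a subuniverse, since $a\le b$ and $c\le d$ imply $a\vee c\le b\vee d$, and dually) with $\Theta$ (a subuniverse, by compatibility), and it is nonempty because $(a,a)\in B$ for every $a\in L$. Let $\varphi\colon\m b\to\m L$ be the restriction to $\m b$ of the first projection of $\m L\times\m L$; it is a surjective homomorphism, surjectivity coming again from $(a,a)\in B$. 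Let $\beta$ be the kernel of the restriction to $\m b$ of the second projection, a congruence on $\m b$ that relates two elements of $B$ precisely when they have equal second coordinate.

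It remains to check $\varphi(\beta)=\Theta$; then Definition~\ref{reprhom} gives that $\Theta$ is the image of the congruence $\beta$ on the subalgebra $\m b$ of $\m L\times\m L$, as desired. For $\Theta\subseteq\varphi(\beta)$, take $(x,y)\in\Theta$; compatibility of $\Theta$ with $\vee$ (applied to the pairs $(x,y)$ and $(x,x)$, and to $(x,y)$ and $(y,y)$) yields $(x,x\vee y)\in\Theta$ and $(y,x\vee y)\in\Theta$, so both $(x,x\vee y)$ and $(y,x\vee y)$ belong to $B$; they are $\beta$-related, and $\varphi$ maps them to $x$ and $y$ respectively. For $\varphi(\beta)\subseteq\Theta$, suppose $(a,b),(a',b)\in B$, so that $a\le b$, $a'\le b$, $(a,b)\in\Theta$ and $(a',b)\in\Theta$; by symmetry $(b,a')\in\Theta$, and compatibility of $\Theta$ with $\wedge$ gives $(a\wedge b,\,b\wedge a')\in\Theta$, i.e. $(a,a')\in\Theta$.

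The one point I expect to need genuine care is the inclusion $\varphi(\beta)\subseteq\Theta$, together with the choice to keep in $B$ only the \emph{comparable} $\Theta$-related pairs: taking instead all $\Theta$-related pairs would only give $\varphi(\beta)=\Theta\circ\Theta$, which may properly contain $\Theta$ since a tolerance need not be transitive, whereas comparability is exactly what forces $a\wedge b=a$ and $a'\wedge b=a'$, so that a single use of $\wedge$-compatibility collapses $\Theta\circ\Theta$ back to $\Theta$. Note that the two inclusions use the two lattice operations separately — $\vee$ for one, $\wedge$ for the other — which is where the hypothesis that $\m L$ is a lattice enters; everything else is formal. Finally, one may observe that $B$ is (the graph of) the compatible relation on $\m L$ given by the comparable pairs in $\Theta$, and that $\varphi(\beta)=\Theta$ is here an instance of $\Theta=R\circ R\conv$, so the argument also exhibits the promised link between the two notions of representability from the abstract.
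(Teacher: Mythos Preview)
Your proof is correct and follows essentially the same route as the paper: the same subalgebra $B=\{(a,b)\mid a\le b,\ a\Thetax b\}$, the same choice of $\varphi$ as first projection and $\beta$ as kernel of the second, and the same two-inclusion argument using $\vee$ for $\Theta\subseteq\varphi(\beta)$ and $\wedge$ for the reverse. Your closing remark that this exhibits $\Theta=R\circ R\conv$ with $R=\,\le\cap\,\Theta$ anticipates exactly the content of Theorem~\ref{thm} and Proposition~\ref{lattgen}.
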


 \begin{proof} 
The partial order  $\leq$ induced by the lattice operations is a compatible  relation on $\mathcal L$, thus 
also $\leq \cap~  \Thetax $  is compatible.
Hence the binary relation $\leq \cap~ \Thetax $ can be considered as a subalgebra $\m b$ of $\m L \times \m L$.
 Let $\varphi : \m b \to \m a$ be the first projection, and $\beta$ on $\m b$ be the 
kernel of the second projection.

We shall show that $\varphi ( \beta ) =\Theta$. Indeed, if
$a \Thetax b$, then $a = a \vee a \Thetax a \vee b$,  
thus $(a,  a \vee b) \in B$, since  $a \leq a \vee b$. Similarly,  $(b,  a \vee b) \in B$. 
Trivially,  $(a,  a \vee b) \betax~ (b,  a \vee b) $, 
$ \varphi (a,  a \vee b) =a$, $  \varphi (b,  a \vee b)=b $, thus 
$ \Theta \subseteq \varphi( \beta )$.

Conversely, if $(a, b) \in \varphi( \beta )$,
then there is $ c \in L$ such that $(a, c) \in B$, $(b, c) \in B$,
hence $a \Thetax c$, $a \leq c$,      
$c \Thetax b$, $b \leq c$, 
thus
$a = a \wedge c \Thetax c \wedge b = b$. 
Hence $  \varphi( \beta ) \subseteq \theta $, and the theorem is proved.
\end{proof}  

Notice that we have not used all the properties of a lattice, thus Theorem \ref{lattices} allows some strengthening, see  Proposition \ref{lattgen} below.
Also, the proof of Theorem \ref{lattices} applies not only to lattices, but also to 
lattices with additional operations, provided the additional operations respect the lattice order, that is, the order remains a compatible  relation with respect to the 
additional operations. 

Notice also that if $\m L$ belongs to some variety of lattices $\mathcal V$, then 
every subalgebra of $\m L \times \m L$ belongs to $\mathcal V$. 
In fact, we do not need the full assumption that $\mathcal V$ is a variety:
we get that if  $\mathcal V$ is a class of lattices, 
and $\mathcal V$ is closed
under subalgebras and finite products
then every tolerances on some lattice in $\mathcal V$ is the image of some congruence of some lattice in $\mathcal V$.

Following \cite{CCH,CK}, we say that \emph{the tolerances of $\mathcal V$ are
images of its congruences} if every
tolerance in $ \mathcal V$
is the image of a congruence in $\mathcal V$.
By G. Cz{\'e}dli and G. Gr{\"a}tzer \cite{CG} the variety of all lattices, and by G. Cz{\'e}dli and
 E. W. Kiss \cite{CK}, every variety of lattices have the above property.
Theorem \ref{lattices} thus furnishes  another proof of   Cz{\'e}dli and
 E. W. Kiss result.

As we shall see in the next section, the reason why Theorem \ref{lattices} holds  is that 
tolerances in lattices are representable in the  sense of  Definition \ref{repr}, to be  introduced in the next section.

\section{Representable tolerances} \labbel{reprsec} 

\begin{definition} \labbel{repr}    
Suppose that  $\m a$ is an algebra, and $R$ is a compatible
reflexive relation on $\m a$. Let $R \conv$  denote the \emph{converse} of $R$.
It is immediate to see that $R \circ R \conv$ is a tolerance on 
$\m a$. Tolerances which are representable in the form  
$R \circ R \conv$ as above have been called \emph{representable}
in \cite[Definition 2]{acta}, where it has been shown, among other things, that not
every tolerance is representable. See  \cite[Section 6]{acta} 
 for more
on representable tolerances.
\end{definition}

\begin{theorem} \labbel{thm} 
If $\Theta$ is  a representable tolerance on the algebra $\m a$, then
$\Theta$ 
is the image of a congruence on some subalgebra of
$\m a \times \m a$.
\end{theorem}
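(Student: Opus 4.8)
The plan is to run exactly the argument of Theorem~\ref{lattices}, with the relation $\mathord{\leq}\cap~\Thetax$ used there replaced by an abstract witness to representability. Since $\Theta$ is representable, fix a compatible reflexive relation $R$ on $\m a$ with $\Thetax = R \circ R\conv$. Compatibility of $R$ is precisely the statement that the subset $R \subseteq A \times A$ is closed under the operations of $\m a \times \m a$, and reflexivity makes it nonempty (it contains the diagonal); so $R$ is the universe of a subalgebra $\m b$ of $\m a \times \m a$. Let $\varphi \colon \m b \to \m a$ be the restriction of the first projection, and let $\beta$ be the kernel of the restriction of the second projection. As in Theorem~\ref{lattices}, reflexivity of $R$ forces both projections onto $\m a$ (from $(a,a) \in R$), so $\varphi$ is a surjective homomorphism; and $\beta$, being the kernel of a homomorphism, is a congruence on $\m b$. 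It therefore remains only to check that $\varphi(\beta) = \Theta$.

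For this one just unwinds the definitions. An element of $B$ is a pair $(a,c) \in R$, and two such pairs are $\beta$-related exactly when their second coordinates coincide. Hence $(a,b) \in \varphi(\beta)$ if and only if there is $c \in A$ with $(a,c) \in R$ and $(b,c) \in R$, that is, with $(a,c) \in R$ and $(c,b) \in R\conv$, that is, $(a,b) \in R \circ R\conv = \Theta$. This settles both inclusions at once, and the theorem follows.

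I do not expect a genuine obstacle: the proof is a straightforward abstraction of the lattice computation already carried out above. The only points that deserve (routine) care are that ``$R$ compatible'' is exactly what makes $R$ a subuniverse of $\m a \times \m a$, that ``$R$ reflexive'' yields surjectivity of the two projections, and that the composition convention for $R \circ R\conv$ must be lined up with the ``$\exists c$'' description of $\varphi(\beta)$, keeping the converse on the correct side. It may also be worth observing afterwards that Theorem~\ref{lattices} is the special case $R = \mathord{\leq}\cap~\Thetax$, since that relation witnesses representability of every tolerance on a lattice; this is presumably the remark foreshadowed at the end of Section~\ref{first}.
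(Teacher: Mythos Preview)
Your argument is correct and is essentially identical to the paper's own proof: the paper also takes $\m b = R$ as a subalgebra of $\m a \times \m a$, $\varphi$ the first projection, $\beta$ the kernel of the second projection, and then verifies $\varphi(\beta)=\Theta$ by the same ``there exists $c$ with $(a,c),(b,c)\in R$'' unwinding. Your closing remark that Theorem~\ref{lattices} is the special case $R=\mathord{\leq}\cap~\Thetax$ is exactly what the paper observes immediately after the proof.
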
 

\begin{proof}
Suppose that 
$\Theta$  is
representable
as $\Theta= R \circ R\conv$, 
for $R$ a reflexive compatible relation on $\m A$.
Let $\m b$  be $R$ itself, considered as a subalgebra of 
$\m a \times \m a$, and let $\varphi$ be the first projection.
Since $R$ is reflexive, we have that $\varphi$ is surjective.
Let $\beta$ on $\m b$ be the kernel of the second projection,
that is, 
$(a,b) \beta (c,d)$ if and only if $b=d$.  We shall show that 
$\varphi( \beta )= \Theta$.

Indeed, for every $a,c \in A$, and 
 since  $\Theta= R \circ R\conv$,
the following is a chain of equivalent conditions.
  \begin{enumerate} 
   \item   
 $a \Thetax c$;
\item
There is $b\in A$ such that 
$(a , b) \in R$ and $ (b, c) \in R\conv$;
\item
There is $b \in A$ such that 
$(a , b) , (c,b) \in R$.
\item
There is $b\in A$ such that 
 $(a,b), (c,b) \in B$
(thus,  $(a,b) \beta  (c,b) $).
\item
 $(a, c) \in \varphi( \beta )$.
\end{enumerate} 
 
We have shown that $\Theta=\varphi( \beta )$, thus the theorem is proved.
\end{proof}  

G. Cz{\'e}dli observed that every 
 tolerance on a lattice is representable, as a consequence of 
Lemma 2 in \cite{Clattices}. Cf. also \cite{CZ}.
See \cite[Proposition 11]{acta} and 
Proposition \ref{lattgen} below, for some slightly more general results. 
Hence Theorem \ref{lattices} is actually a particular case of
Theorem \ref{thm}.  We have given a direct proof of Theorem \ref{lattices} since  it is relatively short and simple.  

It has been shown in \cite{CCHL,CK} that every tolerance 
on some algebra $\m a$ is the image of some congruence $\beta$, for  appropriate $\m b$ and $\varphi$.
However, in most cases,
$\m a$ belongs to some specified variety $\mathcal V$,
and it is a natural request to ask that $\m b$, too, belongs to    
$\mathcal V$. This observation justifies the next definition.

A note on terminology: we shall say that a tolerance $\Theta$  is in a variety $\mathcal V$ to mean that $\Theta$ is a tolerance
 on some algebra $\m a \in \mathcal V$. Technically,
this is justified since a tolerance  on $\m a$  can be seen as a subalgebra 
of $\m a \times \m a$  (and $\Theta$ and $\m a$ generate the same variety,
since, in the above sense, $\m a$ is isomorphic to a substructure of $\Theta$). A similar remark applies to congruences
in place of tolerance.

\begin{definition} \labbel{repvar}
If $\Theta$ is a tolerance on $\m a \in \mathcal V$,
we say that 
\emph{$\Theta$ is the image of a congruence in $\mathcal V$}
if it is possible to chose  $\m b \in \mathcal V$,
$\beta$  a congruence on $\m b$, 
and 
$\varphi : \m b \to \m a$
 a surjective homomorphism 
such that $\Theta = \varphi ( \beta )$.

The above definition is a local version
of the mentioned notion from  \cite{CCH,CK} that \emph{the tolerances of $\mathcal V$ are
 images of its congruences}.

Though in the above definitions $\mathcal V$ is intended to stand for a 
variety,  our results generally hold for an arbitrary class $\mathcal V$ 
which is closed under taking subalgebras and products, in particular,
for quasivarieties. Actually, in most cases, it is enough to assume that $\mathcal V$ is closed under taking subalgebras and finite products. 
 \end{definition}   

As an immediate consequence of Theorem \ref{thm}, we get: 

\begin{corollary} \labbel{thmim} 
If all tolerances in the variety $\mathcal V$ are representable, then 
the tolerances of $\mathcal V$ are the
 images of its congruences
(actually, it is enough to suppose that 
$\mathcal V$ 
is closed under taking subalgebras and finite products).
\end{corollary}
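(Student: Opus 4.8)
The plan is to reduce the statement directly to Theorem \ref{thm}, together with the closure hypotheses on $\mathcal V$. Suppose $\Theta$ is a tolerance on some algebra $\m a \in \mathcal V$. Since by assumption every tolerance in $\mathcal V$ is representable, in particular $\Theta$ is representable, say $\Theta = R \circ R \conv$ for some reflexive compatible relation $R$ on $\m a$. Now invoke the proof of Theorem \ref{thm}: it produces $\m b$ as $R$ regarded as a subalgebra of $\m a \times \m a$, it takes $\varphi$ to be the first projection (surjective because $R$ is reflexive), and it takes $\beta$ to be the kernel of the second projection, concluding $\Theta = \varphi(\beta)$.

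The only thing left to check is that this witnessing algebra $\m b$ actually lies in $\mathcal V$. But $\m b$ is a subalgebra of $\m a \times \m a$, hence of a finite product of members of $\mathcal V$, so $\m b \in \mathcal V$ whenever $\mathcal V$ is closed under subalgebras and finite products (a fortiori under subalgebras and arbitrary products). This is exactly the extra hypothesis recorded in the parenthetical remark, and it is the only place the closure properties are used. Thus $\m b \in \mathcal V$, $\beta$ is a congruence on $\m b$, $\varphi : \m b \to \m a$ is a surjective homomorphism, and $\Theta = \varphi(\beta)$, so $\Theta$ is the image of a congruence in $\mathcal V$ in the sense of Definition \ref{repvar}. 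Since $\m a \in \mathcal V$ and $\Theta$ were arbitrary, the tolerances of $\mathcal V$ are the images of its congruences.

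There is essentially no obstacle here: the corollary is a straightforward packaging of Theorem \ref{thm} once one observes that the subalgebra constructed in that theorem stays inside $\mathcal V$ under the stated closure assumptions. The only point requiring a word of care is purely bookkeeping — making sure the notion ``image of a congruence in $\mathcal V$'' from Definition \ref{repvar} is matched clause by clause ($\m b \in \mathcal V$, $\beta$ a congruence on $\m b$, $\varphi$ surjective, $\Theta = \varphi(\beta)$) by the data coming out of Theorem \ref{thm}.
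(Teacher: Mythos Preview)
Your proposal is correct and matches the paper's approach exactly: the paper states the corollary as an immediate consequence of Theorem~\ref{thm} without further proof, and your argument simply spells out why --- the subalgebra $\m b \subseteq \m a \times \m a$ produced there stays in $\mathcal V$ under closure by subalgebras and finite products.
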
  

The converse of Corollary \ref{thmim} is not true. By \cite{CCHL,CK},
in
every variety defined by the empty set of equations
the tolerances of $\mathcal V$ are the
 images of its congruences, but, by
\cite[Proposition 10]{acta} (see also Proposition \ref{ex} below), there exists a non representable tolerance
on some algebra
(which trivially belongs to a variety defined by an empty set of equations).

However, it is possible to show that, within a given variety,  a tolerance is the image of
some congruence if and only if it is the image of some representable tolerance
(see Corollary \ref{imtol} below).
This can be useful, since if we want to show that, for a variety $\mathcal V$,
the tolerances of $\mathcal V$ are the
 images of its congruences, it is enough to show 
that the tolerances of $\mathcal V$ are the
 images of its representable tolerances.

\section{Weakly representable tolerances} \labbel{weaksec} 

There is a version of Theorem \ref{thm} dealing with a more general notion
of representability.

\begin{definition} \labbel{weakly}   
Let $\lambda$ be a nonzero cardinal.

We say that a tolerance is \emph{$\lambda$-weakly representable} if it
is the intersection of at most $\lambda$  representable tolerances. 

A tolerance is \emph{weakly representable} if it is
$\lambda$-weakly representable, for some nonzero cardinal $\lambda$.
Thus, representable is the same as $1$-weakly representable.
See again \cite[Section 6]{acta} 
 for more
informations about weakly representable tolerances.
\end{definition} 

In the statement of the next theorem, $+$ denotes \emph{cardinal sum},
that is, $\lambda+1= \lambda $, if $\lambda$ is infinite.

\begin{thm} \labbel{propweakly} 
If $\Theta$ is  a $\lambda$-weakly representable tolerance on the algebra $\m a$, then
$\Theta$ 
is the image of a congruence on some subalgebra of
the power
$\m a ^{ \lambda +1} $.

Hence, if $\mathcal V$ is a class closed under subalgebras and
products (in particular, if $\mathcal V$ is a variety),
then every weakly representable tolerance belonging to 
$\mathcal V$ is 
 the image of a congruence in $\mathcal V$.
\end{thm}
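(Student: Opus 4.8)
The plan is to generalize the argument in the proof of Theorem~\ref{thm}, replacing a single compatible reflexive relation $R$ with a family of them. Suppose $\Theta$ is $\lambda$-weakly representable, so $\Theta = \bigcap_{i \in I} \Theta_i$ where $|I| \le \lambda$ and each $\Theta_i = R_i \circ R_i\conv$ for some compatible reflexive relation $R_i$ on $\m a$. First I would fix, for each $i$, a witnessing element: the point is that $a \mathrel{\Theta_i} c$ means there is $b_i \in A$ with $(a,b_i), (c,b_i) \in R_i$. The idea is to package all these witnesses together into a single element of a high power of $\m a$, and use the kernel of a suitable projection as the congruence $\beta$.

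Concretely, I would build $\m b$ as a subalgebra of $\m a^{I \cup \{*\}}$ (so $|I \cup \{*\}| \le \lambda + 1$), consisting of those tuples $(x_*, (x_i)_{i \in I})$ such that $(x_*, x_i) \in R_i$ for every $i \in I$. This is a compatible subset of the power because each $R_i$ is compatible, hence a subalgebra $\m b$. Since each $R_i$ is reflexive, the constant tuple $(a, a, a, \dots)$ lies in $\m b$ for every $a \in A$, so the projection $\varphi$ onto the $*$-coordinate is surjective. Let $\beta$ be the kernel of the projection onto the remaining $I$-coordinates, i.e.\ $(x_*, (x_i)_i) \mathrel{\beta} (y_*, (y_i)_i)$ iff $x_i = y_i$ for all $i \in I$. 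Then I claim $\varphi(\beta) = \Theta$. For the inclusion $\Theta \subseteq \varphi(\beta)$: if $a \mathrel{\Theta} c$, then for each $i$ pick $b_i$ with $(a,b_i),(c,b_i) \in R_i$; the tuples $(a, (b_i)_i)$ and $(c, (b_i)_i)$ both lie in $B$, they are $\beta$-related, and project to $a$ and $c$ respectively. For the reverse inclusion: if $(a,c) \in \varphi(\beta)$, there are tuples in $B$ over $a$ and $c$ sharing all $I$-coordinates $(b_i)_i$; then for each $i$ we get $(a,b_i), (c,b_i) \in R_i$, so $a \mathrel{\Theta_i} c$, and since this holds for all $i$, $a \mathrel{\bigcap_i \Theta_i} c$, i.e.\ $a \mathrel{\Theta} c$. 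This proves the first assertion; the cardinality bound is exactly $|I| + 1 \le \lambda + 1$ with $+$ cardinal sum as stipulated.

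The second assertion is then immediate: if $\mathcal V$ is closed under subalgebras and products and $\m a \in \mathcal V$, then $\m a^{\lambda+1} \in \mathcal V$ and so is its subalgebra $\m b$, while $\beta$ is a congruence in $\mathcal V$ and $\varphi$ a surjective homomorphism, so $\Theta$ is the image of a congruence in $\mathcal V$ in the sense of Definition~\ref{repvar}. A weakly representable tolerance is $\lambda$-weakly representable for some $\lambda$, so this covers all weakly representable tolerances in $\mathcal V$.

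I do not expect a genuine obstacle here; the one point requiring a little care is the bookkeeping of index sets and the cardinal arithmetic — ensuring the power is $\m a^{\lambda+1}$ rather than something larger when $\lambda$ is finite versus infinite, and confirming that the family of representable tolerances can be indexed by a set of size \emph{exactly} at most $\lambda$ (taking $I$ to be such an index set directly from Definition~\ref{weakly}). The compatibility of $\m b$ as a subalgebra of the power, and the fact that a coordinatewise intersection of compatible relations defining $\m b$ remains compatible, is routine and parallels the single-relation case in Theorem~\ref{thm}.
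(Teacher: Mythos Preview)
Your proposal is correct and follows essentially the same approach as the paper: you build $\m b$ as the subalgebra of $\m a^{I \cup \{*\}}$ cut out by the conditions $(x_*, x_i) \in R_i$, take $\varphi$ to be the projection onto the distinguished coordinate and $\beta$ the kernel of the projection onto the remaining coordinates, and verify $\varphi(\beta) = \Theta$ just as in Theorem~\ref{thm}. The only difference is that the paper simply refers back to the argument of Theorem~\ref{thm} rather than spelling out both inclusions, and indexes directly by $\lambda$ rather than by a set $I$ of size at most $\lambda$; your more explicit verification and your remarks on the cardinal bookkeeping are correct and add nothing new.
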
 

\begin{proof}
Suppose that $\Theta = \bigcap_{i \in \lambda } \Theta _i $, where each
$\Theta_i $ has the form $  R_i \circ R_i\conv$, for certain reflexive compatible relations  
$R_i$.
Let  
$\m b$ be the subalgebra of $ \m a \times \m a^ \lambda \cong \m a ^{ \lambda +1} $ whose base set is
$B= 
\{ (a, (a_i) _{i \in  \lambda })   \mid 
a, a_i \in A, \text{ and }    a \Rx _i a_i, \text{ for each } i \in \lambda     \}$.  
The assumption that each $R_i$ is compatible implies that $\m b$ is indeed a subalgebra
of  $ \m a \times \m a^ \lambda  $.

Let $\varphi: \m b \to \m  a$ be the first projection, and
let $\beta$ be the kernel of the second projection 
$\pi : \m b \to \m a^ \lambda $. The same arguments as in the proof of
 Theorem \ref{thm} show that
$\varphi ( \beta ) = \Theta $.       
\end{proof}
 
The converse of Theorem \ref{propweakly} does not hold, in general; see 
Proposition \ref{ex}.  
However, we expect that the converse of  Theorem \ref{propweakly}
is true, under some mild assumptions on $\m a$ or $\mathcal V$.

Theorem \ref{propweakly} gives us the possibility of improving
Corollary \ref{thmim}. 

\begin{corollary} \labbel{thmimw} 
If all tolerances in a variety $\mathcal V$ are weakly representable, then 
the tolerances of $\mathcal V$ are the
 images of its congruences (indeed, it is enough to assume that $\mathcal V$ 
is closed under subalgebras and arbitrary products).
\end{corollary}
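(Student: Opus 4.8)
The plan is to derive Corollary \ref{thmimw} directly from Theorem \ref{propweakly} by a routine bookkeeping argument, exactly parallel to the way Corollary \ref{thmim} follows from Theorem \ref{thm}. First I would take an arbitrary tolerance $\Theta$ in $\mathcal V$; by definition this means $\Theta$ is a tolerance on some algebra $\m a \in \mathcal V$. Since by hypothesis every tolerance in $\mathcal V$ is weakly representable, $\Theta$ is in particular weakly representable, that is, $\lambda$-weakly representable for some nonzero cardinal $\lambda$. Now I apply Theorem \ref{propweakly}: it produces a subalgebra $\m b$ of the power $\m a^{\lambda+1}$, a congruence $\beta$ on $\m b$, and a surjective homomorphism $\varphi\colon \m b \to \m a$ with $\varphi(\beta)=\Theta$.

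The one thing to check is that $\m b$ lies in $\mathcal V$. This is where the closure hypothesis is used: if $\mathcal V$ is closed under subalgebras and arbitrary products, then $\m a^{\lambda+1}\in\mathcal V$ (it is a product of copies of $\m a$), and hence its subalgebra $\m b$ is also in $\mathcal V$. In particular, if $\mathcal V$ is a variety this holds automatically. Consequently $\Theta=\varphi(\beta)$ exhibits $\Theta$ as the image of a congruence $\beta$ on an algebra $\m b \in \mathcal V$, i.e. $\Theta$ is the image of a congruence in $\mathcal V$ in the sense of Definition \ref{repvar}.

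Since $\Theta$ was an arbitrary tolerance in $\mathcal V$, this shows that the tolerances of $\mathcal V$ are the images of its congruences, which is the assertion. I do not expect any real obstacle here: the only subtlety, as opposed to the finitary Corollary \ref{thmim}, is that $\lambda$ may be infinite, so the power $\m a^{\lambda+1}$ need not be finite and one genuinely needs closure under \emph{arbitrary} products rather than merely finite products — this is exactly why the parenthetical hypothesis in the statement is phrased with ``arbitrary products.'' Everything else is immediate from Theorem \ref{propweakly}.
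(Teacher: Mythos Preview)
Your proposal is correct and matches the paper's approach: the corollary is stated without proof as an immediate consequence of Theorem~\ref{propweakly}, whose second paragraph already asserts that in a class closed under subalgebras and products every weakly representable tolerance is the image of a congruence in the class. Your explicit unpacking, including the observation that closure under \emph{arbitrary} products is needed because $\lambda$ may be infinite, is exactly the intended argument.
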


Extending Definition \ref{reprhom} in the natural way, 
if $\m a$ and $\m C$ are algebras, 
we say that \emph{a tolerance $\Theta$ on $\m a$ is the image of some tolerance $\Psi$ on $\m c$} if 
there is some surjective homomorphism $\psi: \m c \to \m a$
such that $\Theta = \{ ( \psi (a), \psi(b))\mid a \Psix b\} $.    
It is trivial to see that the image of some tolerance, in the above sense, is again a tolerance.

\begin{lemma} \labbel{3tol}
If 
$\Theta$, $\Psi$ and $\Phi$ are tolerances,
$\Theta$ is an image of  $\Psi$,
and $\Psi$ is an image of $\Phi$, then
$\Theta$  is an image of $\Phi$.
 \end{lemma}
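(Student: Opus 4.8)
The plan is to chase the two surjective homomorphisms through and compose them. Suppose $\Theta$ is the image of $\Psi$ via a surjective homomorphism $\psi : \m c \to \m a$, so that $\Theta = \{(\psi(a), \psi(b)) \mid a \Psix b\}$, and $\Psi$ is the image of $\Phi$ via a surjective homomorphism $\chi : \m d \to \m c$, so that $\Psi = \{(\chi(c), \chi(d)) \mid c \Phix d\}$, where $\Phi$ is a tolerance on $\m d$. First I would set $\varrho = \psi \circ \chi : \m d \to \m a$; being a composite of surjective homomorphisms, $\varrho$ is itself a surjective homomorphism. The claim to verify is then that $\Theta = \{(\varrho(x), \varrho(y)) \mid x \Phix y\}$.

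For the inclusion $\{(\varrho(x), \varrho(y)) \mid x \Phix y\} \subseteq \Theta$: if $x \Phix y$ in $\m d$, then $(\chi(x), \chi(y)) \in \Psi$ by the definition of $\Psi$ as the image of $\Phi$; writing $c = \chi(x)$, $d = \chi(y)$, we have $c \Psix d$, hence $(\psi(c), \psi(d)) \in \Theta$, and $\psi(c) = \psi(\chi(x)) = \varrho(x)$, similarly $\psi(d) = \varrho(y)$, so $(\varrho(x), \varrho(y)) \in \Theta$. For the reverse inclusion: if $(u, v) \in \Theta$, then there are $c, d$ in the base set of $\m c$ with $c \Psix d$, $\psi(c) = u$, $\psi(d) = v$; since $c \Psix d$ and $\Psi$ is the image of $\Phi$, there are $x, y$ in $\m d$ with $x \Phix y$, $\chi(x) = c$, $\chi(y) = d$; then $\varrho(x) = \psi(\chi(x)) = \psi(c) = u$ and $\varrho(y) = v$, so $(u,v)$ lies in the image of $\Phi$ under $\varrho$.

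I do not expect any real obstacle here: the whole argument is a routine diagram chase using surjectivity of $\psi$ and $\chi$ and the definition of ``image of a tolerance'' in both directions. The only point that deserves a word is that $\Phi$ is already a tolerance (given), so $\varrho(\Phi)$ is automatically a tolerance by the observation made just before the lemma; hence once the set equality $\Theta = \varrho(\Phi)$ is established, the conclusion that ``$\Theta$ is an image of $\Phi$'' is immediate from Definition \ref{reprhom} as extended in the preceding paragraph. Thus the proof reduces entirely to the two inclusions above, each proved by unwinding one definition and applying the other.
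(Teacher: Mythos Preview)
Your proof is correct and follows exactly the same approach as the paper: compose the two witnessing surjective homomorphisms. The paper's proof is a single sentence asserting that $\psi \circ \varphi$ witnesses the conclusion, whereas you spell out the two inclusions explicitly; the extra detail is fine but not needed.
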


\begin{proof} 
Let $\Theta$ be on $\m a$,
 $\Psi$ be on $\m c$,
and $\Phi$ be on $\m d$,
and let the assumption of the lemma be witnessed
by surjective homomorphisms
$\psi: \m c \to \m a$
and
$\varphi: \m d \to \m c$. Then
$ \psi \circ \varphi: \m d \to \m a$ witnesses that   $\Theta$ is an image of 
$\Phi$.
\end{proof} 

\begin{corollary} \labbel{imtol}
Let $\mathcal V$ be a class of algebras
closed under subalgebras and products (in particular,  a variety).
For every tolerance $\Theta$ in $ \mathcal V$, the following conditions are equivalent.
  \begin{enumerate}   
 \item 
 $\Theta$ is the image of a congruence in $\mathcal V$.
\item
$\Theta$ is the image of a representable tolerance in $\mathcal V$.
\item
$\Theta$ is the image of a weakly representable tolerance in $\mathcal V$.
  \end{enumerate} 

In particular, for every  $\mathcal V$ as above, the tolerances of $\mathcal V$ are the
 images of its congruences if and only if 
 the tolerances of $\mathcal V$ are the
 images of its (weakly) representable tolerances.
\end{corollary}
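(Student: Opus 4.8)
The plan is to prove the cycle of implications $(1)\Rightarrow(2)\Rightarrow(3)\Rightarrow(1)$ and then deduce the final ``in particular'' sentence by applying the resulting equivalence uniformly to every tolerance in $\mathcal V$.

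For $(1)\Rightarrow(2)$ I would first record the elementary remark that every congruence is a representable tolerance in the sense of Definition \ref{repr}: if $\beta$ is a congruence on an algebra $\m b$, then $\beta$ is reflexive and compatible, $\beta\conv=\beta$ by symmetry, and $\beta\circ\beta\conv=\beta\circ\beta=\beta$ by transitivity, hence $\beta=\beta\circ\beta\conv$. Consequently, if $\Theta$ is the image of a congruence $\beta$ on some $\m b\in\mathcal V$ via a surjective homomorphism $\varphi$, then $\beta$ is already a representable tolerance in $\mathcal V$ of which $\Theta=\varphi(\beta)$ is an image, which is exactly $(2)$. The implication $(2)\Rightarrow(3)$ is immediate, since by Definition \ref{weakly} a representable tolerance is $1$-weakly representable and hence weakly representable.

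The step carrying the actual content is $(3)\Rightarrow(1)$, and it is where Theorem \ref{propweakly} and Lemma \ref{3tol} enter. Assume $\Theta$ is the image of a weakly representable tolerance $\Psi$ living on some $\m c\in\mathcal V$. By Theorem \ref{propweakly}, $\Psi$ is the image of a congruence $\beta$ on some subalgebra $\m d$ of the power $\m c^{\lambda+1}$; here I invoke the hypothesis that $\mathcal V$ is closed under subalgebras and products to get $\m d\in\mathcal V$. Since a congruence is a tolerance, Lemma \ref{3tol} applies with $\beta$ in the role of $\Phi$: from ``$\Theta$ is an image of $\Psi$'' and ``$\Psi$ is an image of $\beta$'' we obtain ``$\Theta$ is an image of $\beta$'', and $\beta$ is a congruence in $\mathcal V$, so $(1)$ holds. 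This closes the cycle, giving the equivalence of $(1)$, $(2)$, $(3)$.

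Finally, for the ``in particular'' claim I would observe that ``the tolerances of $\mathcal V$ are the images of its congruences'' says precisely that every tolerance in $\mathcal V$ satisfies $(1)$, while ``the tolerances of $\mathcal V$ are the images of its (weakly) representable tolerances'' says that every tolerance in $\mathcal V$ satisfies $(2)$ (respectively $(3)$); applying the equivalence just proved to each tolerance separately yields the equivalence of these global statements. I do not anticipate a real obstacle: the whole argument is a chaining of results already available, and the only points that require a moment's attention are the trivial-but-essential observation that a congruence is a representable tolerance (so that $(1)\Rightarrow(2)$ needs no extra hypothesis) and the bookkeeping that keeps the auxiliary algebra $\m d$ inside $\mathcal V$ in the step $(3)\Rightarrow(1)$.
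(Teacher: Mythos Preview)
Your proposal is correct and follows essentially the same route as the paper: the trivial implications $(1)\Rightarrow(2)\Rightarrow(3)$ via ``congruences are representable'' and ``representable implies weakly representable'', and then $(3)\Rightarrow(1)$ by combining Theorem~\ref{propweakly} with Lemma~\ref{3tol}. The only difference is cosmetic---you spell out $\beta=\beta\circ\beta\conv$ and the closure bookkeeping in slightly more detail than the paper does.
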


 \begin{proof} 
(1) $\Rightarrow $  (2) and (2) $\Rightarrow $  (3) are trivial, since
every congruence $\beta$ is representable (as $\beta= \beta \circ \beta $),
and since every representable tolerance is weakly representable.

(3) $\Rightarrow $  (1) If $\Theta$ is the image of  a weakly representable tolerance $\Psi$ in $\mathcal V$, then, by Theorem \ref{propweakly},  
$\Psi$ is the image of some congruence $\beta$ in $\mathcal V$,
hence, by Lemma \ref{3tol},  $\Theta$ is the image of $\beta$.
\end{proof}

\begin{cor} \labbel{equiv}
Suppose that 
 $\Theta$ is  a tolerance on the algebra $\m a$. Then the following conditions are equivalent.
  \begin{enumerate} 
\item
$\Theta$ 
is the image of a congruence on some subalgebra of
some power
$\m a ^I$, for some 
set $I$.
\item  
For every variety $\mathcal V$ such that $\m a\in \mathcal V$,
$\Theta$ 
is the image of a congruence in $\mathcal V$.
\item
 $\Theta$ 
is the image of a congruence 
in $\mathcal V( \m a)$,
the variety generated by $\m a$. 
 \end{enumerate} 
In all the preceding conditions we can equivalently replace the 
word ``congruence'' with either ``representable tolerance'' or 
``weakly representable tolerance''. 
 \end{cor}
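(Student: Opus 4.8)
The plan is to prove \ref{equiv} by a cycle of implications, leaning heavily on the machinery already assembled. First I would observe that (2) $\Rightarrow$ (3) is immediate, since $\mathcal V(\m a)$ is one particular variety containing $\m a$. The implication (1) $\Rightarrow$ (2) is the step where I would use closure properties: if $\Theta$ is the image of a congruence on a subalgebra $\m b$ of some power $\m a^I$, then $\m b$ belongs to every variety $\mathcal V$ containing $\m a$ (varieties being closed under products and subalgebras), so the same witnessing data $\m b$, $\beta$, $\varphi$ show that $\Theta$ is the image of a congruence in $\mathcal V$. It remains to close the cycle with (3) $\Rightarrow$ (1), which is where the real content sits, and I would handle it as follows.

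For (3) $\Rightarrow$ (1), suppose $\Theta = \varphi(\beta)$ for some $\m b \in \mathcal V(\m a)$, $\beta$ a congruence on $\m b$, and $\varphi : \m b \to \m a$ surjective. By Birkhoff's HSP theorem, $\m b$ is a homomorphic image of a subalgebra $\m c$ of a power $\m a^I$; say $\eta : \m c \to \m b$ is surjective. The natural move is to pull $\beta$ back: let $\beta' = \eta^{-1}(\beta)$, which is a congruence on $\m c$, and note that $\varphi \circ \eta : \m c \to \m a$ is a surjective homomorphism with $(\varphi\circ\eta)(\beta') = \varphi(\eta(\eta^{-1}(\beta))) = \varphi(\beta) = \Theta$, the middle equality using surjectivity of $\eta$. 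Thus $\Theta$ is the image of the congruence $\beta'$ on the subalgebra $\m c$ of $\m a^I$, which is exactly (1).

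For the parenthetical strengthening — replacing ``congruence'' by ``representable tolerance'' or ``weakly representable tolerance'' throughout — I would argue that the three resulting statements are in fact all equivalent to the congruence version. In one direction this is trivial: every congruence is representable and every representable tolerance is weakly representable, so the congruence form of each condition implies the corresponding (weakly) representable form. For the converse, I would invoke Corollary \ref{imtol}: applied to the variety $\mathcal V(\m a)$, it gives that $\Theta$ is the image of a congruence in $\mathcal V(\m a)$ iff it is the image of a (weakly) representable tolerance in $\mathcal V(\m a)$; combined with the already-established equivalence of (1)–(3) for congruences, and with the observation that the (weakly) representable versions of (1)–(3) are interrelated by the very same Birkhoff-pullback and closure arguments just given (a representable tolerance on $\m c$ pulls back along $\eta$ to... — actually here one must be slightly careful, since the pullback of a representable tolerance need not be representable), I would instead route everything through Corollary \ref{imtol} and Lemma \ref{3tol}: the image of a weakly representable tolerance is a congruence-image by Theorem \ref{propweakly} plus Lemma \ref{3tol}, so all the ``image of a (weakly) representable tolerance in $\mathcal V$'' conditions collapse onto the congruence conditions, and the equivalence of (1)–(3) for congruences then transfers verbatim.

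The main obstacle I anticipate is precisely this last point: the naive ``pull back the witnessing relation along the Birkhoff surjection'' argument works cleanly for congruences (pullbacks of congruences are congruences) but not directly for representable tolerances, since representability is not obviously preserved under inverse images of homomorphisms. The clean way around it is not to prove the representable versions of (1)–(3) independently, but to reduce them all to the congruence case via Corollary \ref{imtol} — which says that within any fixed variety, ``image of a congruence'' and ``image of a (weakly) representable tolerance'' coincide — applied to each of the ambient classes ($\mathcal V(\m a)$, an arbitrary $\mathcal V \ni \m a$, and the class $\mathsf{SP}(\m a)$ of subalgebras of powers of $\m a$, which is also closed under subalgebras and products). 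Once one notices that all three ``word-variants'' of condition (i) are equivalent to its congruence form for each fixed $i$, the desired equivalence of the word-variants of (1), (2), (3) follows from the equivalence of (1), (2), (3) for congruences, with no separate pullback argument for tolerances needed.
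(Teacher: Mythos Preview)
Your proof is correct and follows essentially the same route as the paper: the cycle (1)$\Rightarrow$(2)$\Rightarrow$(3)$\Rightarrow$(1) is handled identically (closure of varieties under $\mathsf{S}$ and $\mathsf{P}$ for (1)$\Rightarrow$(2), and the HSP/pullback argument for (3)$\Rightarrow$(1), which the paper phrases via Lemma~\ref{3tol} rather than the direct computation $(\varphi\circ\eta)(\eta^{-1}(\beta))=\varphi(\beta)$, but these are the same). For the final statement the paper simply says ``immediate from Corollary~\ref{imtol}''; your more explicit version---applying Corollary~\ref{imtol} to each of $\mathcal V(\m a)$, an arbitrary $\mathcal V\ni\m a$, and $\mathsf{SP}(\m a)$ to collapse the word-variants onto the congruence case---is exactly the intended unpacking, and your observation that the naive pullback of a representable tolerance need not be representable (so one must route through Corollary~\ref{imtol} rather than argue directly) is a correct diagnosis that the paper leaves implicit.
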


\begin{proof}
(1)   $\Rightarrow $  (2) is obvious, since if $\m a \in \mathcal V$, then
every subalgebra of
$\m a ^I$ is in $\mathcal V$.

(2)   $\Rightarrow $  (3) is trivial.

(3)   $\Rightarrow $  (1) Let $\Theta$ 
be an image of $\gamma$, a congruence on 
$\m c \in \mathcal V( \m a)$.
By the HSP characterization of  $V( \m a)$,
there are a set $I$,  an algebra $\m b \subseteq \m a^I$, 
and a surjective homomorphism
$\varphi: \m b \to \m c$.
Then $\beta= \varphi ^{-1} ( \gamma ) 
= \{(b, b') \mid b,b'\in B \text{ and } (\varphi(b), \varphi(b')) \in \gamma  \}  $
is a congruence on $\m b$,
and $\varphi ( \beta )= \gamma $,
in the sense of Definition \ref{reprhom}.

Thus $\Theta$ 
is an image of $\gamma$,
which is an image of $\beta$,
hence, by Lemma \ref{3tol},  $\Theta$ is an image of 
$\beta$, a congruence on  $\m a^I$,
and (1) is proved.

The last statement is immediate
from Corollary \ref{imtol}. 
 \end{proof}

\section{Additional remarks} \labbel{add}

We first provide a generalization of Theorem \ref{lattices}.
Its proof  exploits exactly the only properties of lattices which were used in the proof of 
\ref{lattices}.

\begin{proposition} \labbel{lattgen}
Suppose that $\m a$ is an algebra with 
two binary operations $\vee$ and $\wedge$  (among possibly other operations),
and with a compatible binary relation $M$, which satisfy the following conditions:
  \begin{enumerate} 
   \item 
$a \vee a= a$, for every $a \in A$.
\item
$a \Mx (a \vee b)$, and 
$b \Mx (a \vee b)$,
for every $a,b \in A$.
\item 
$a= a \wedge c = c \wedge a $, for every $a,c \in A$ such that $a \Mx c$. 
  \end{enumerate}

Then every tolerance $\Theta$ of $\m a$ is representable,
and  is an image of some congruence on some subalgebra of $\m a \times \m a$.
\end{proposition}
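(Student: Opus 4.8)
The plan is to mimic the proof of Theorem~\ref{lattices}, checking that the three hypotheses on $M$ replace exactly the facts about the lattice order that were used there. First I would establish that $\Theta$ is representable. Since $M$ is a compatible binary relation, so is $M \cap \Theta$; let $R = M \cap \Theta$. By condition~(2) we have $a \mathrel{M} (a \vee b)$, and since $\Theta$ is reflexive and compatible with $\vee$, from $a \mathrel{\Theta} b$ (together with condition~(1), $a\vee a = a$) we get $a = a\vee a \mathrel{\Theta} a \vee b$, so $(a, a\vee b) \in R$; symmetrically $(b, a\vee b)\in R$, hence $(a,b) \in R \circ R\conv$. Conversely, if $(a,b) \in R \circ R\conv$, there is $c$ with $(a,c), (b,c) \in R = M\cap\Theta$; then $a \mathrel{M} c$ and $b \mathrel{M} c$, so by condition~(3) $a = a \wedge c$ and $b = c \wedge b$ (or $c\wedge b = b$), whence $a = a\wedge c \mathrel{\Theta} c \wedge b = b$, using that $\wedge$ is compatible with $\Theta$ and $a \mathrel{\Theta} c$, $c \mathrel{\Theta} b$. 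This shows $\Theta = R \circ R\conv$, so $\Theta$ is representable.

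Having shown representability, the second assertion is then immediate from Theorem~\ref{thm}: a representable tolerance on $\m a$ is the image of a congruence on some subalgebra of $\m a \times \m a$. (One could equally well give the construction directly, as in Theorem~\ref{lattices}: take $\m b$ to be $R$ itself as a subalgebra of $\m a \times \m a$, let $\varphi$ be the first projection and $\beta$ the kernel of the second projection, and run the chain of equivalences. But invoking Theorem~\ref{thm} is cleaner.)

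The only points that need care are the bookkeeping about which operation is applied where and in what order, and making sure reflexivity and compatibility of $\Theta$ are invoked correctly (for instance, $a = a\vee a \mathrel{\Theta} a\vee b$ uses reflexivity $a\mathrel{\Theta}a$ and compatibility of $\Theta$ with $\vee$ applied to the pairs $(a,a)$ and $(a,b)$). Condition~(3) is stated symmetrically precisely so that both $a = a \wedge c$ and $b = c \wedge b$ are available regardless of the order of arguments. I do not expect a genuine obstacle here: the proposition is deliberately engineered so that conditions~(1)--(3) are exactly the hypotheses consumed by the argument for lattices, where $a\vee a = a$ is idempotency of join, $a, b \leq a\vee b$ is condition~(2) with $M = {\leq}$, and $a\leq c \Rightarrow a = a\wedge c$ is condition~(3). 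The main thing to verify is simply that no further lattice identity (associativity, absorption, the dual idempotency $a\wedge a = a$) sneaks into the reasoning — and it does not.
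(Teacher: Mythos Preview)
Your proposal is correct and follows essentially the same approach as the paper: take $R = M \cap \Theta$, verify $\Theta = R \circ R\conv$ by rerunning the argument of Theorem~\ref{lattices} with $M$ in place of $\leq$, and then invoke Theorem~\ref{thm} for the image-of-a-congruence assertion. The only minor point you leave implicit is that $R$ is reflexive (needed for Definition~\ref{repr}); this follows at once from conditions~(1) and~(2), since $a \mathrel{M} (a \vee a) = a$.
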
 

\begin{proof}
Same as the proof of Theorem 
\ref{lattices}, using $M$ in place of $\leq$:
$\Theta $ is representable as $  R \circ R\conv$, with $R= \Mx \cap~  \Theta $.
The last statement is immediate from
Theorem \ref{thm}.
\end{proof}

\begin{remark} \labbel{rmkcasin}
Condition (2) in Proposition \ref{lattgen} is satisfied 
in case 
$M$ is defined by  
\begin{equation*}
 a \Mx b \text{ if and only if  } a \vee b =b, 
 \end{equation*}   
and $\m a$ satisfies 
$ a \vee (a \vee b)= a \vee b$ and
 $ b \vee (a \vee b)= a \vee b$,
for every $a,b \in A$.
\end{remark}   

By Proposition \ref{lattgen}, and writing explicitly 
the condition that the $M$ given by  Remark \ref{rmkcasin}
is compatible, we get:

\begin{proposition} \labbel{lattgen2}
Suppose that $\m a$ is an algebra with 
(exactly) two binary operations $\vee$ and $\wedge$ satisfying the following conditions:
  \begin{enumerate} 
\item[(0)]
For every $a,a',b,b' \in A$, if 
$a \vee b= b$
and 
$a' \vee b'= b'$,
then 
$(a' \vee a') \vee (b \vee b')= b \vee b'$
and 
$(a' \vee a') \wedge (b \vee b')= b \wedge b'$.
   \item 
$a \vee a= a$, for every $a \in A$.
\item
$a \vee (a \vee b)= a \vee b$, and 
$b \vee (a \vee b)= a \vee b$,
for every $a,b \in A$.
\item 
For every $a,c \in A$, if 
$a \vee c= c$,
then $a= a \wedge c = c \wedge a $. 
  \end{enumerate}

Then every tolerance $\Theta$ of $\m a$ is representable,
and  is an image of some congruence on some subalgebra of $\m a \times \m a$.
\end{proposition}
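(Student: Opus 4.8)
The plan is to deduce Proposition \ref{lattgen2} directly from Proposition \ref{lattgen} by checking that the hypotheses of the latter are met. So I would set $M$ to be the relation defined by $a \Mx b$ iff $a \vee b = b$, exactly as in Remark \ref{rmkcasin}, and then verify conditions (1)--(3) of Proposition \ref{lattgen} together with the requirement that $M$ be compatible with $\vee$ and $\wedge$.

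First I would observe that condition (1) of \ref{lattgen2} is literally condition (1) of \ref{lattgen}. For condition (2) of \ref{lattgen}, I would invoke Remark \ref{rmkcasin}: hypotheses (2) of \ref{lattgen2}, namely $a \vee (a \vee b) = a \vee b$ and $b \vee (a \vee b) = a \vee b$, are precisely what Remark \ref{rmkcasin} requires in order that $a \Mx (a \vee b)$ and $b \Mx (a \vee b)$. For condition (3) of \ref{lattgen}, note that $a \Mx c$ unwinds to $a \vee c = c$, so hypothesis (3) of \ref{lattgen2} gives exactly $a = a \wedge c = c \wedge a$. Finally, compatibility of $M$: since $\m a$ has exactly the two binary operations $\vee$ and $\wedge$, saying $M$ is compatible means that whenever $a \Mx b$ and $a' \Mx b'$ we have $(a \vee a') \Mx (b \vee b')$ and $(a \wedge a') \Mx (b \wedge b')$; and, using $a \vee a = a$ from condition (1) to rewrite $a' \vee a'$ as $a'$, this is precisely hypothesis (0) of \ref{lattgen2} (the first conjunct being $(a \vee a') \vee (b \vee b') = b \vee b'$ and the second being $(a \wedge a') \vee (b \wedge b') = b \wedge b'$, i.e.\ $(a \wedge a') \Mx (b \wedge b')$). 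With all hypotheses of Proposition \ref{lattgen} verified, its conclusion applies verbatim, giving both representability of every tolerance $\Theta$ on $\m a$ and the fact that $\Theta$ is an image of a congruence on a subalgebra of $\m a \times \m a$.

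I do not expect any genuine obstacle here; the content is entirely in Proposition \ref{lattgen} and Remark \ref{rmkcasin}, and the present statement is just the result of substituting the explicit definition of $M$ and writing out what ``$M$ is compatible'' means in terms of the operations $\vee$ and $\wedge$. The only point requiring a moment's care is the bookkeeping in hypothesis (0): one must match the two displayed equations against the two compatibility requirements (one for $\vee$, one for $\wedge$) and use the idempotency $a' \vee a' = a'$ to see that the ``$a' \vee a'$'' appearing there is a harmless way of writing $a'$ so that condition (0) is stated using only the two allowed operations. Once that correspondence is made explicit, the proof is a one-line appeal to Proposition \ref{lattgen}.
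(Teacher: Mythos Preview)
Your approach is exactly the paper's: no separate proof is given for Proposition \ref{lattgen2}, the paper merely remarking beforehand that it follows from Proposition \ref{lattgen} by taking $M$ as in Remark \ref{rmkcasin} and ``writing explicitly the condition that $M$ \ldots\ is compatible.'' Your verification is correct in spirit; the slight awkwardness you encountered in matching condition~(0) to compatibility stems from apparent typos in the printed statement (one expects $(a\vee a')$ and $(a\wedge a')$ in place of $(a'\vee a')$), not from any defect in your argument.
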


We now show that the conditions exploited in the proof of Theorem \ref{thm} actually characterize
representable tolerances.

\begin{prop} \labbel{thm2} 
 Suppose that  $\Theta$ is  a  tolerance on the algebra $\m a$. Then  $\Theta$ is  representable if and only if 
$\Theta$ can be realized as  the image of a congruence on some subalgebra $\m b$ of
$\m a \times \m a$, such that $\m b$  contains $\Delta= \{(a,a) \mid a \in A\}$, and in such a way that  $\varphi$  and $\beta$ in
Definition \ref{reprhom} can be chosen to be, respectively, the first projection and the kernel of the second projection.
\end{prop}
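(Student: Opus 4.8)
The plan is to read this as a two-directional version of the proof of Theorem \ref{thm}, in which the hypothesis that $R$ is reflexive corresponds to the requirement that $\m b$ contains $\Delta$, and the surjectivity of $\varphi$ is recovered from that same reflexivity.

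For the ``only if'' direction, I would start from a representation $\Theta = R \circ R\conv$ with $R$ a reflexive compatible relation on $\m a$, take $\m b$ to be $R$ itself viewed as a subalgebra of $\m a \times \m a$ (compatibility of $R$ is exactly what makes it such a subalgebra), observe that reflexivity of $R$ yields $\Delta \subseteq B$, let $\varphi$ be the first projection and $\beta$ the kernel of the second projection, and then quote the chain of equivalences (1)--(5) in the proof of Theorem \ref{thm} to obtain $\varphi(\beta) = \Theta$; surjectivity of $\varphi$ follows from $\Delta \subseteq B$. This is precisely the realization asserted in the statement.

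For the ``if'' direction I would run the same equivalences in reverse. Given a realization as described --- $\m b \subseteq \m a \times \m a$ with $\Delta \subseteq B$, with $\varphi$ the first projection, $\beta$ the kernel of the second projection, and $\Theta = \varphi(\beta)$ --- I would set $R := B$, now regarded as a binary relation on $\m a$. Since $\m b$ is a subalgebra of $\m a \times \m a$, the relation $R$ is compatible, and $\Delta \subseteq B$ makes it reflexive. It then remains to verify $\Theta = R \circ R\conv$: for $a, c \in A$ one has $(a,c) \in R \circ R\conv$ iff there is $b \in A$ with $(a,b), (c,b) \in B$, iff (using that $\beta$ collapses exactly those pairs of $B$ that agree in the second coordinate) there is such a $b$ with $(a,b) \mathrel{\beta} (c,b)$, iff $(a,c) = (\varphi(a,b), \varphi(c,b)) \in \varphi(\beta) = \Theta$. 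Hence $\Theta = R \circ R\conv$ is representable.

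I do not anticipate a genuine obstacle; the only point needing care is the bookkeeping around Definition \ref{reprhom}, namely that $\varphi(\beta) = \{(\varphi(x), \varphi(y)) \mid x \mathrel{\beta} y\}$, so that when $\beta$ is the kernel of the second projection the witnessing pairs $x, y$ range precisely over the elements of $B$ sharing a second coordinate --- which is exactly what turns $\varphi(\beta)$ into the relational composition $R \circ R\conv$. It is also worth emphasizing why $\m b$ is required to live inside $\m a \times \m a$ rather than inside some larger power of $\m a$: this is what allows $B$ to be reinterpreted directly as a reflexive compatible relation on $\m a$ itself, which would not be available for a subalgebra of $\m a^{I}$ with $|I| > 2$.
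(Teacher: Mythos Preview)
Your proposal is correct and follows essentially the same approach as the paper: both directions reuse the construction and the chain of equivalences from Theorem~\ref{thm}, taking $R = B$ for the converse and noting that $\Delta \subseteq B$ gives reflexivity. Your write-up is in fact slightly more explicit than the paper's in unwinding the equivalence $\varphi(\beta) = R \circ R\conv$ for the ``if'' direction.
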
 

\begin{proof}
The construction used in the proof of Theorem \ref{thm} shows that if $\Theta$ is representable, then $\m b$, 
$\varphi$  and $\beta$ can be chosen to satisfy the desired requirements.

Conversely, suppose that we have $\m b \subseteq \m a \times \m a$, 
$\varphi$  and $\beta$ satisfying the conditions in the statement of the 
proposition.
 Being a subalgebra of $ \m a \times \m a$,
 $\m b$ can be thought of as a compatible relation on $\m a$. We shall take $R=B$. 
Since $B$ contains $\Delta$, then $R$ is reflexive.
By assumption, $a \Thetax c$ if and only if  $(a, c) \in \varphi( \beta )$. Noticing that the equivalence of items (5) and (2) in the proof of Theorem \ref{thm} holds also in the present situation, we get that
$a \Thetax c$ if and only if
there is $b\in A$ such that 
$(a , b) \in R$ and $ (b, c) \in R\conv$.
This means exactly that $\Theta= R \circ R\conv$. 
\end{proof}

We now show that the converse of Theorem \ref{propweakly}
fails in a large class of algebras. 

\begin{proposition} \labbel{ex}
For every set $A$, and every reflexive and symmetric relation 
$\Theta$ on $A$ which is not transitive, there is an algebra 
$\m a$ with base set $A$ and such that $\Theta$ is a tolerance 
on $\m a$ which is not weakly representable, but $\Theta$ is the image of some congruence
on some subalgebra of some power of $\m a$.
 \end{proposition}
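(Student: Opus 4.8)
The plan is to take for $\m a$ the algebra with base set $A$ whose operations are \emph{all} finitary operations on $A$ compatible with $\Theta$. Then $\Theta$ is reflexive and symmetric by hypothesis and compatible by the choice of operations, hence a tolerance on $\m a$; the point of this choice is that it forces the compatible relations of $\m a$ to be as scarce as possible. It then remains to prove (i) that $\Theta$ is not weakly representable on $\m a$, and (ii) that $\Theta$ is nevertheless the image of a congruence on a subalgebra of a power of $\m a$.

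For (i), I would first prove the key lemma: if $R$ is a reflexive compatible binary relation on $\m a$ and $R \neq \Delta$, then $R \supseteq \Theta$. To see this, fix $(a_0,a_1) \in R$ with $a_0 \neq a_1$, let $(b,b') \in \Theta$ be arbitrary, and define $f \colon A \to A$ by $f(a_1)=b'$ and $f(x)=b$ for every $x \neq a_1$. A short case check, using only reflexivity and symmetry of $\Theta$, shows that $f$ preserves $\Theta$; hence the binary operation $(x,y) \mapsto f(x)$ is an operation of $\m a$. Applying it coordinatewise in $\m a \times \m a$ to the elements $(a_0,a_1)$ and $(a_1,a_1)$ of $R$ gives $(f(a_0),f(a_1))=(b,b')$, which therefore lies in $R$ since $R$ is a subalgebra of $\m a \times \m a$; as $(b,b')$ was arbitrary, $R \supseteq \Theta$. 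Now assume, towards a contradiction, that $\Theta = \bigcap_{j \in J} S_j$ with each $S_j = R_j \circ R_j\conv$ for a reflexive compatible $R_j$. Each $S_j$ is a tolerance with $\Theta \subseteq S_j$, and since $\Theta \neq \Delta$ (a non-transitive relation is not the identity) we cannot have $R_j = \Delta$; so the lemma yields $R_j \supseteq \Theta$, whence $S_j \supseteq \Theta \circ \Theta$ (also using that $\Theta$ is symmetric). Then $\Theta = \bigcap_j S_j \supseteq \Theta \circ \Theta$, contradicting non-transitivity: if $a \Thetax b \Thetax c$ with $(a,c) \notin \Theta$, then $(a,c) \in (\Theta \circ \Theta) \setminus \Theta$.

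For (ii), by Corollary \ref{equiv} it is enough to present $\Theta$ as the image of a congruence in $\mathcal V(\m a)$. By \cite{CCHL,CK}, $\Theta = \psi(\gamma)$ for some congruence $\gamma$ on an algebra $\m c$ of the same similarity type as $\m a$ and some surjective homomorphism $\psi \colon \m c \to \m a$. Let $T = (\psi \times \psi)^{-1}(\Theta)$, a tolerance on $\m c$, and let $\beta$ be the largest congruence of $\m c$ contained in $T$. Since $\gamma$ is a congruence and $\gamma \subseteq T$, we have $\gamma \subseteq \beta$, so $\Theta = \psi(\gamma) \subseteq \psi(\beta) \subseteq \psi(T) = \Theta$, giving $\psi(\beta)=\Theta$. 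Finally let $\rho$ be the congruence of $\m c$ generated by all substitution instances of the identities valid in $\m a$; then $\m c/\rho$ satisfies those identities, so $\m c/\rho \in \mathcal V(\m a)$. Because $\m a$ satisfies the same identities, $\rho \subseteq \ker\psi$, so $\psi$ induces a surjective homomorphism $\bar\psi \colon \m c/\rho \to \m a$; moreover $\rho$, being a congruence contained in $\ker\psi \subseteq T$, is contained in $\beta$. Hence $\beta/\rho$ is a congruence of $\m c/\rho \in \mathcal V(\m a)$ with $\bar\psi(\beta/\rho) = \psi(\beta) = \Theta$, which is what (ii) requires.

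The one genuinely delicate point is the lemma in the second paragraph; once all $\Theta$-preserving operations are available it becomes easy, since a single ``collapse everything but one point'' map already transports an arbitrary pair of $R$ onto an arbitrary pair of $\Theta$ — and it is precisely this phenomenon that the maximal choice of $\m a$ guarantees. Part (ii) is routine given \cite{CCHL,CK} and Corollary \ref{equiv}.
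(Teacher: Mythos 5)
Your part (i) is correct and is essentially the paper's own argument: any map $f\colon A\to\{b,b'\}$ with $b\Thetax b'$ preserves $\Theta$ (only reflexivity and symmetry are needed), hence is an operation of your algebra, and applying it to an off-diagonal pair of a reflexive compatible relation $R$ produces $(b,b')\in R$; thus every $R\neq\Delta$ contains $\Theta$, so $R\circ R\conv\supseteq\Theta\circ\Theta\supsetneq\Theta$ and no intersection of relations of the form $R\circ R\conv$ can equal $\Theta$. The paper's algebra consists of exactly these unary maps and nothing else; your choice of \emph{all} compatible finitary operations only makes this half easier.

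Part (ii), however, contains a genuine gap, and your choice of algebra makes it worse. The step ``let $\beta$ be the largest congruence of $\m c$ contained in $T$'' is ill-posed: the congruences contained in a non-transitive tolerance are not closed under joins, so a largest one need not exist (already on a three-element set a reflexive symmetric non-transitive relation contains two incomparable maximal equivalence relations whose join escapes it). With only a maximal $\beta$ you cannot guarantee $\gamma\subseteq\beta$ and $\rho\subseteq\beta$ simultaneously, and the natural candidate $\gamma\vee\rho$ need not lie in $T$: a chain $c_0\,\gamma\,c_1\,\rho\,c_2\,\gamma\,c_3$ projects under $\psi$ only into $\Theta\circ\Theta$. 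More decisively, your argument for (ii) uses no property of $\m a$ at all; were it valid, it would prove that \emph{every} tolerance on \emph{every} algebra is the image of a congruence in $\mathcal V(\m a)$, contradicting Proposition \ref{3perm} and Corollary \ref{npermv} (in a $3$-permutable non-permutable variety such a tolerance would have to be a congruence). The paper escapes this by taking $\m a$ to be the \emph{unary} algebra whose operations are precisely the maps $f\colon A\to\{a,b\}$ with $a\Thetax b$: then $\mathcal V(\m a)$ is a unary variety, \cite[Corollary 4.4]{CK} gives that $\Theta$ is the image of a congruence in $\mathcal V(\m a)$, and Corollary \ref{equiv} converts this into the claim about a subalgebra of a power of $\m a$. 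To repair your proof, restrict the operations of $\m a$ to (a family of) $\Theta$-compatible unary maps so that this citation applies; part (i) survives unchanged.
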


  \begin{proof} 
For every $a, b \in A$ such that $a \Thetax b$,
and for every   function $f: A \to \{ a, b \} $,
 add to $A$ a unary function symbols representing $f$.
It is easy to see that $\Theta$ is a tolerance on the algebra thus obtained, and
that $\Theta$ is not weakly representable. Indeed, every nontrivial compatible relation $R$ on $\m a$ 
contains $\Theta$, and, since $\Theta$ is not transitive, then
$\Theta \subset \Theta \circ \Theta \subseteq R \circ R\conv$  (see \cite[Proposition 12]{acta} for more details).

Consider $ \mathcal V(\m a)$, the variety generated by $\m a$. 
Since    $ \mathcal V(\m a)$ is unary, then, by 
\cite[Corollary 4.4]{CK}, tolerances are images of congruences 
in $ \mathcal V(\m a)$. Then Corollary \ref{equiv}(3) $\Rightarrow $  (1)
implies that  
$\Theta$ is the image of some congruence
on some subalgebra of some power of $\m a$.
\end{proof}  

On the other hand, under certain conditions,
the converse of Theorem \ref{propweakly} does hold.

\begin{proposition} \labbel{3perm}
Suppose that $\m a$ is an algebra in a 3\hspace{1.5 pt}-permutable variety $\mathcal V$,
and $\Theta$ is a tolerance on $\m a$. Then the following conditions are equivalent.
\begin{enumerate}
   \item  
$\Theta$ is representable.
\item
$\Theta$ is weakly representable.
\item
$\Theta$ is the image of some congruence on
some subalgebra of $\m a \times \m a$. 
\item
$\Theta$ is the image of some congruence in
$\mathcal V( \m a)$.
\item
$\Theta$ is a congruence of $\m a$. 
 \end{enumerate} 

If we only assume that
every subalgebra of $\m a \times \m a$ has 3\hspace{1.5 pt}-permutable congruences,
then Conditions (1), (3) and (5) above are still equivalent.
 \end{proposition}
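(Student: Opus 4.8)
The plan is to prove the cycle of implications $(5)\Rightarrow(1)\Rightarrow(2)\Rightarrow(3)\Rightarrow(4)\Rightarrow(5)$, where most arrows are already available from earlier results, so that the only genuinely new work is $(4)\Rightarrow(5)$, and this is precisely the place where $3$-permutability enters. The implications $(1)\Rightarrow(2)$ and $(2)\Rightarrow(3)$ are immediate: representable tolerances are $1$-weakly representable, and $(2)\Rightarrow(3)$ is exactly Theorem~\ref{propweakly} applied in $\mathcal V(\m a)$ together with the fact that, for a weakly representable tolerance, the subalgebra can be taken inside $\m a\times\m a$ when only finitely many $R_i$ are needed — but more simply one routes $(2)\Rightarrow(4)$ through Theorem~\ref{propweakly} and Corollary~\ref{equiv}. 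The implication $(3)\Rightarrow(4)$ is trivial since a subalgebra of $\m a\times\m a$ lies in $\mathcal V(\m a)$. Finally $(5)\Rightarrow(1)$ holds because every congruence $\beta$ is representable as $\beta\circ\beta$. So the whole proposition reduces to showing $(4)\Rightarrow(5)$: if $\Theta$ is the image of a congruence in $\mathcal V(\m a)$ and $\mathcal V$ is $3$-permutable, then $\Theta$ is transitive.

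The key step, $(4)\Rightarrow(5)$, I would carry out as follows. Suppose $\Theta=\varphi(\beta)$ for a surjective homomorphism $\varphi:\m b\to\m a$ with $\m b\in\mathcal V(\m a)$ and $\beta$ a congruence on $\m b$. Write $\kappa=\ker\varphi$. Then $\Theta=\varphi(\beta)$ means $a\mathrel\Theta c$ iff $(a',c')\in\beta$ for some $a',c'\in B$ with $\varphi(a')=a$, $\varphi(c')=c$; equivalently, $\varphi^{-1}(\Theta)=\kappa\circ\beta\circ\kappa$ as a relation on $B$. Now $\m b\in\mathcal V$ and $\mathcal V$ is $3$-permutable, so any two congruences $\delta,\varepsilon$ on $\m b$ satisfy $\delta\circ\varepsilon\circ\delta=\varepsilon\circ\delta\circ\varepsilon$; in particular $\kappa\circ\beta\circ\kappa=\beta\circ\kappa\circ\beta$, and this common relation is therefore a congruence on $\m b$ (it is the join $\kappa\vee\beta$, since $3$-permutability makes $\delta\circ\varepsilon\circ\delta$ the join). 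Hence $\varphi^{-1}(\Theta)$ is a congruence on $\m b$ containing $\kappa=\ker\varphi$, and its image $\Theta=\varphi(\varphi^{-1}(\Theta))$ is therefore a congruence on $\m a$, because the image of a congruence containing the kernel of a surjective homomorphism is a congruence. That gives $(4)\Rightarrow(5)$ and closes the cycle.

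For the last assertion, suppose only that every subalgebra of $\m a\times\m a$ has $3$-permutable congruences; we want $(1)$, $(3)$, $(5)$ equivalent. Here $(5)\Rightarrow(1)$ is as before, and $(1)\Rightarrow(3)$ is Theorem~\ref{thm} (the subalgebra $\m b$ produced there sits inside $\m a\times\m a$). The remaining implication $(3)\Rightarrow(5)$ repeats the argument of the previous paragraph verbatim, but now applied to the specific $\m b\subseteq\m a\times\m a$ furnished by $(3)$: $3$-permutability of the congruences of this particular $\m b$ is exactly what is used, so the weaker hypothesis suffices. Note we do not get $(2)$ and $(4)$ into this restricted equivalence, since those involve algebras (powers $\m a^{\lambda+1}$, or arbitrary members of $\mathcal V(\m a)$) that need not be covered by the hypothesis on subalgebras of $\m a\times\m a$.

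The main obstacle is the identification $\varphi^{-1}(\Theta)=\kappa\circ\beta\circ\kappa$ and the observation that under $3$-permutability this composite is a congruence: one must check carefully that $\varphi(\beta)$, pulled back along $\varphi$, is genuinely $\kappa\circ\beta\circ\kappa$ and not merely contained in or containing it, and that $\kappa\circ\beta\circ\kappa$ being symmetric and transitive follows from $\delta\circ\varepsilon\circ\delta=\varepsilon\circ\delta\circ\varepsilon$. Everything else is bookkeeping with the implications already proved in the excerpt. I would also remark, in passing, that the equivalence of $(5)$ with the others recovers the known fact that in a $3$-permutable variety every tolerance that is an image of a congruence is already a congruence, which is the expected behaviour since in such varieties one has strong restrictions on tolerances (they are determined by their ``diagonal behaviour'' up to the congruence they generate).
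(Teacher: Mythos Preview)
Your overall structure matches the paper's: the trivial implications $(1)\Rightarrow(2)$, $(3)\Rightarrow(4)$, $(5)\Rightarrow(1)$, together with $(1)\Rightarrow(3)$ via Theorem~\ref{thm} and $(2)\Rightarrow(4)$ via Theorem~\ref{propweakly}, leaving $(4)\Rightarrow(5)$ as the substantive step. The one genuine difference is how $(4)\Rightarrow(5)$ is handled. The paper simply invokes a known result (Theorem~1.10 in \cite[Chapter~7]{J}) that the image of a congruence on an algebra with $3$-permuting congruences is again a congruence. You instead give a direct argument: writing $\kappa=\ker\varphi$, you identify $\varphi^{-1}(\Theta)=\kappa\circ\beta\circ\kappa$, observe that $3$-permutability forces $\kappa\circ\beta\circ\kappa=\beta\circ\kappa\circ\beta=\kappa\vee\beta$, hence $\varphi^{-1}(\Theta)$ is a congruence above $\kappa$, and then apply the correspondence theorem. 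This is correct and is essentially the content of the cited result, so your version has the advantage of being self-contained; the paper's is shorter.

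One small wrinkle to clean up: your remark that ``$(2)\Rightarrow(3)$ is exactly Theorem~\ref{propweakly}\ldots when only finitely many $R_i$ are needed'' is not right, since Theorem~\ref{propweakly} produces a subalgebra of $\m a^{\lambda+1}$, not of $\m a\times\m a$, and weak representability may require infinitely many $R_i$. You immediately abandon this for $(2)\Rightarrow(4)$, which is the correct route and is exactly what the paper does, so the logical content is unaffected; just drop the parenthetical about $(2)\Rightarrow(3)$. For the final assertion under the weaker hypothesis, your argument for $(3)\Rightarrow(5)$ is again the explicit version of what the paper obtains by citing the same external result, applied to the specific $\m b\subseteq\m a\times\m a$.
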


  \begin{proof}
(1) $\Rightarrow $  (2) and (3) $\Rightarrow $  (4) are trivial.

(1) $\Rightarrow $  (3) and (2) $\Rightarrow $  (4) follow from, respectively,
Theorems \ref{thm} and \ref{propweakly}.  

(4) $\Rightarrow $  (5) Clearly $\mathcal V(\m a) $, being a subvariety of $  \mathcal V$, 
is 3\hspace{1.5 pt}-permutable, too, hence $\Theta$ is the image of some congruence on 
some algebra with 3\hspace{1.5 pt}-permuting congruences. But it is well-known that
this implies that $\Theta$ is a congruence, see Theorem 1.10 in \cite[Chapter 7]{J}. 

(5) $\Rightarrow $  (1) is trivial, since if $\Theta$ is a congruence, then
$\Theta = \Theta \circ \Theta\conv$. 

Under the assumption that every subalgebra of $\m a \times \m a$ has 3\hspace{1.5 pt}-permutable congruences, (3) $\Rightarrow $  (5) holds, again by Theorem 1.10 in \cite[Chapter 7]{J}. The implications (1) $\Rightarrow $  (3) and (5) $\Rightarrow $  (1) do not use 3\hspace{1.5 pt}-permutability at all.
 \end{proof}

We expect that parts of Proposition \ref{3perm}  hold under assumptions
weaker than 3\hspace{1.5 pt}-permutability. However, globally
(that is, if we ask that the conditions hold for every tolerance in a 
3\hspace{1.5 pt}-permutable variety),
Proposition \ref{3perm} is essentially an empty result, in the sense
that the conditions hold only in permutable varieties (in which they are trivially true).

\begin{corollary} \labbel{npermv} 
Suppose that $\mathcal V$ is an $n$\hspace{1.5 pt}-permutable variety,
for some $n$.  Then the following conditions are equivalent. 
\begin{enumerate} 
\item
Every tolerance in $\mathcal V$ is weakly representable.
\item
Every tolerance in $\mathcal V$ is representable.
\item
Every tolerance in $\mathcal V$ is the image of a congruence in $\mathcal V$.
  \item 
$\mathcal V$ is permutable.
\item
Every tolerance in $\mathcal V$ is a congruence.
  \end{enumerate}
\end{corollary}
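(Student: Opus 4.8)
The plan is to establish the cycle $(4)\Rightarrow(5)\Rightarrow(2)\Rightarrow(1)\Rightarrow(3)\Rightarrow(4)$. Four of these links are immediate or already in hand: $(5)\Rightarrow(2)$ because a congruence $\beta$ is representable via $\beta=\beta\circ\beta\conv$; $(2)\Rightarrow(1)$ because a representable tolerance is $1$-weakly representable; $(1)\Rightarrow(3)$ is Corollary~\ref{thmimw}; and $(4)\Rightarrow(5)$ is the classical fact that in a permutable variety every tolerance is a congruence (if $m$ is a Mal'cev term and $a\mathrel\Theta b\mathrel\Theta c$, apply $m$ to the columns $(a,b),(b,b),(b,c)\in\Theta$ to get $a=m(a,b,b)\mathrel\Theta m(b,b,c)=c$, so $\Theta$ is transitive). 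Thus the real work is $(5)\Rightarrow(4)$ and $(3)\Rightarrow(4)$.

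For $(5)\Rightarrow(4)$, which needs no hypothesis on $\mathcal V$: let $\alpha,\beta$ be congruences on some $\m b\in\mathcal V$ and set $\Theta:=(\alpha\circ\beta)\cap(\beta\circ\alpha)$. Being an intersection of two compatible reflexive relations, and being symmetric because $(\alpha\circ\beta)\conv=\beta\circ\alpha$, the relation $\Theta$ is a tolerance on $\m b$; by $(5)$ it is a congruence. Now $\alpha=\alpha\circ\Delta\subseteq\alpha\circ\beta$ and $\alpha=\Delta\circ\alpha\subseteq\beta\circ\alpha$, so $\alpha\subseteq\Theta$, and likewise $\beta\subseteq\Theta$; hence $\alpha\vee\beta\subseteq\Theta$. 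Since also $\Theta\subseteq\alpha\circ\beta\subseteq\alpha\vee\beta$, we get $\alpha\circ\beta=\alpha\vee\beta=\beta\circ\alpha$, so $\alpha$ and $\beta$ permute. Thus $\mathcal V$ is permutable, and in particular $(4)\Leftrightarrow(5)$.

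It remains to prove $(3)\Rightarrow(4)$, equivalently $(3)\Rightarrow(5)$, and this is the only point where $n$-permutability enters. I would argue by contraposition: assuming $\mathcal V$ is $n$-permutable but not permutable, produce a tolerance in $\mathcal V$ that is not the image of any congruence in $\mathcal V$. The template is the discussion around Proposition~\ref{3perm} for $n=3$: if a tolerance $\Theta$ on $\m a$ is the image of a congruence in $\mathcal V$, then by Corollary~\ref{equiv} it is the image, say via $\varphi$ with kernel $\kappa$, of a congruence $\gamma$ on a subalgebra $\m b$ of a power of $\m a$; the preimage $\varphi^{-1}(\Theta)$ is then $\kappa\circ\gamma\circ\kappa=(\kappa\circ\gamma)\circ(\kappa\circ\gamma)\conv$, a representable tolerance on $\m b$, and for $n=3$ one concludes via Theorem~1.10 of \cite[Chapter~7]{J} that it, hence $\Theta$, is a congruence; as $\mathcal V$ is not permutable the $(5)\Rightarrow(4)$ argument above (read contrapositively) produces a proper tolerance, which therefore witnesses the failure of $(3)$. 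I would try to run the same scheme for general $n$, controlling the transitive closure of the relevant relations by the Hagemann--Mitschke terms and the bound $R\conv\subseteq R^{\,n-1}$, valid for every reflexive compatible $R$ in an $n$-permutable variety.

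The step I expect to be the main obstacle is precisely this last one. For $n\ge 4$ the convenient fact "the image of a congruence is a congruence" fails even on algebras with $n$-permuting congruences — e.g.\ $\kappa\circ\beta\circ\kappa$ need not be transitive for congruences $\kappa,\beta$ that do not $3$-permute, and then $(\kappa\circ\beta)\circ(\kappa\circ\beta)\conv$ is a representable, non-transitive tolerance — so the pointwise identification used for $n=3$ breaks down and a genuinely global argument is required. Concretely, I would look for a single well-chosen tolerance that no congruence in $\mathcal V$ can map onto: a natural candidate is again $(\alpha\circ\beta)\cap(\beta\circ\alpha)$ for congruences $\alpha,\beta$ witnessing non-permutability, or the analogous intersection of iterated compositions built from congruences witnessing the failure of $(n-1)$-permutability, the latter suggesting a proof by downward induction on $n$ that eventually reduces to the $3$-permutable case of Proposition~\ref{3perm}. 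Equivalently, by Corollary~\ref{imtol} it would be enough to show that such a tolerance is not even the image of a weakly representable tolerance in $\mathcal V$. Extracting from $n$-permutability enough rigidity to block all such images is the delicate part of the argument.
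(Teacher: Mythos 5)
Your chain breaks at the one link that carries all the content of the corollary: the implication (3) $\Rightarrow$ (4) is not proved. The other links are fine. Indeed (5) $\Rightarrow$ (2) $\Rightarrow$ (1) $\Rightarrow$ (3) are correct (the last by Corollary \ref{thmimw}), and your two arguments for (4) $\Leftrightarrow$ (5) are both valid and self-contained: the Mal'cev-term computation for (4) $\Rightarrow$ (5), and, for (5) $\Rightarrow$ (4), the observation that $(\alpha\circ\beta)\cap(\beta\circ\alpha)$ is a tolerance which, if it is a congruence, must equal $\alpha\vee\beta$ and be contained in $\alpha\circ\beta$. (The paper simply cites \cite{We} and \cite{Hu} for this equivalence, so here you actually supply more detail than the source.) But for (3) $\Rightarrow$ (4) you only describe a strategy --- contraposition, a candidate tolerance, downward induction on $n$ controlled by the Hagemann--Mitschke terms --- and you explicitly concede that for $n\ge 4$ you do not see how to carry it out, correctly noting that the pointwise ``image of a congruence under a $3$-permutable quotient is a congruence'' reduction fails beyond $n=3$. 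As it stands you have established $(4)\Leftrightarrow(5)\Rightarrow(2)\Rightarrow(1)\Rightarrow(3)$ with nothing coming back from (3), so the five conditions are not yet proved equivalent.

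For comparison, the paper does not prove this implication from first principles either: it invokes Theorem 5.3 of Cz\'edli and Kiss \cite{CK}, which says precisely that an $n$-permutable variety whose tolerances are images of its congruences must be permutable. That external theorem is the intended content of the step you could not fill in, and your instinct that a genuinely global, Mal'cev-condition-level argument is required (rather than a pointwise transfer along a single surjection) is consistent with the fact that the paper outsources it. The proposal should therefore either cite that result or reproduce its proof; without one or the other there is a real gap.
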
 

\begin{proof}
(1) $\Rightarrow $  (2) and (5) $\Rightarrow $  (1) are trivial.

(2) $\Rightarrow $  (3) follows from Theorem \ref{thmim}.

(3) $\Rightarrow $  (4) is \cite[Theorem 5.3]{CK}.

 (4) $\Rightarrow $  (5) is immediate from a classical result from 
\cite{We}, parts of which are due independently to G. Hutchinson \cite{Hu}.
Actually, Conditions (4) and (5) are equivalent for every variety, as follows
easily from the above papers, and explicitly stated, e.~g.,  in \cite{Ch}. 
 \end{proof}

\begin{problems} \labbel{more}
Notice that, again by \cite{CZ,Clattices}, tolerances in lattices satisfy
a property stronger  than representability.
Indeed, if $\Theta$ is a tolerance on a lattice $\m l$, then

\begin{equation} \labbel{eq}
\text{there is a compatible relation $R$  such that } \Theta = (R \circ R\conv) \cap (R\conv \circ R),
 \end{equation}       

or even 
\begin{multline} \labbel{eqm}
\text{there is a compatible relation $R$  such that   $a \Thetax  b $ if and only if } \\ 
\text{there are $c$ and $d$ such that 
$a \Rx c \Rx\conv b$, $a \Rx\conv d \Rx b$, and $d \Rx c$}. 
 \end{multline}    

(just take 
$R = \Thetax \cap \leq$, 
$c= a \vee b$ and $d= a \wedge b$) 

Which parts of the theory of tolerances on lattices follow
just from the assumption (\ref{eq})  or (\ref{eqm})?

Notice that we do not need all the axioms for lattices, in order to get
 (\ref{eq}) above: the properties listed in Proposition \ref{lattgen}, together with their duals
suffice. 
 \end{problems}

\end{document}